\theoremstyle{plain}
\newtheorem{theorem}{Theorem}
\newtheorem{proposition}{Proposition}
\newtheorem{lemma}[proposition]{Lemma}
\theoremstyle{definition}
\newtheorem{definition}{Definition}
\newtheorem{example}{Example}
\newtheorem{remark}{Remark}
\newtheorem*{question}{Question}
\begin{document}

\title{Finite self-similar $p$-groups with abelian first level stabilizers}

\author{Zoran {\v S}uni\'c}
\address{Department of Mathematics, MS-3368, Texas A\&M University, College Station, TX 77843-3368, USA}
\email{sunic@math.tamu.edu}
\thanks{Partially supported by the NSF under DMS-0805932.}

\begin{abstract}
We determine all finite $p$-groups that admit a faithful, self-similar action on the $p$-ary rooted tree such that the first level stabilizer is abelian. A group is in this class if and only if it is a split extension of an elementary abelian $p$-group by a cyclic group of order $p$. 

The proof is based on use of virtual endomorphisms. In this context the result says that if $G$ is a finite $p$-group with abelian subgroup $H$ of index $p$, then there exists a virtual endomorphism of $G$ with trivial core and domain $H$ if and only if $G$ is a split extension of $H$ and $H$ is an elementary abelian $p$-group. 
\end{abstract}

\keywords{finite $p$-groups, virtual endomorphisms, self-similar groups}

\subjclass[]{20D15, 20E08}

\maketitle

%------------------------------------------------------------------------

\section*{Introduction}

The goal of this paper is to prove the following result. 

\begin{theorem} \label{t:main}
A finite $p$-group admits a faithful, self-similar action on the rooted $p$-ary tree such that the first level stabilizer is abelian if and only if it is a split extension of an elementary abelian $p$-group by a cyclic group of order $p$. 
\end{theorem}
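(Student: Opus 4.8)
The plan is to prove the virtual-endomorphism reformulation stated in the abstract, from which Theorem~\ref{t:main} follows at once via the standard dictionary between self-similar actions on the $p$-ary tree and virtual endomorphisms: such an action with abelian first level stabilizer $H$ forces $[G:H]=p$ and $H$ normal in $G$, and corresponds to a virtual endomorphism $\phi\colon H\to G$ with domain $H$, the action being faithful exactly when the core of $\phi$ — the largest subgroup $K\le H$ that is normal in $G$ and satisfies $\phi(K)\le K$ — is trivial. Fixing $a\in G\setminus H$, conjugation by $a$ is an automorphism $A$ of the abelian group $H$ with $A^p=\mathrm{id}$, and writing $H$ additively and $\phi(h)=\tau(h)a^{\mu(h)}$ with $\tau(h)\in H$, $\mu(h)\in\mathbb Z/p$, one has that $\mu$ is a homomorphism, $\tau(h+h')=\tau(h)+A^{\mu(h)}\tau(h')$, and a subgroup $K\le H$ is a core candidate precisely when it is $A$-invariant, $\mu|_K=0$, and $\tau(K)\subseteq K$. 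Thus triviality of the core is a condition purely about $A$-invariant subgroups of $\ker\mu$.

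For the \emph{if} direction, let $H=V\cong\mathbb F_p^{\,n}$ be elementary abelian, $G=V\rtimes\langle a\rangle$, so $A$ acts on $V$ as a unipotent linear map and $W:=\ker(A-\mathrm{id})\neq 0$. I choose an endomorphism $\beta$ of $W$ with irreducible characteristic polynomial over $\mathbb F_p$ (e.g.\ a companion matrix), a linear projection $\pi\colon V\to W$ with $\pi|_W=\mathrm{id}_W$, and a linear functional $f\colon V\to\mathbb F_p$ with $f|_W\neq 0$, and set $\phi(v)=a^{f(v)}\beta(\pi(v))$. Since $\beta(\pi(v))$ lies in the fixed space $W$, a direct computation in the semidirect product shows $\phi$ is a homomorphism with domain of index $p$, hence a virtual endomorphism. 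If $K\le V$ were a nonzero $A$-invariant, $\phi$-invariant subgroup, then $\mu=f$ would vanish on $K$, so $K\subseteq\ker f$; but $K\cap W\neq 0$ since $A|_K$ is unipotent, and $K\cap W$ is $\beta$-invariant since $\phi$ agrees with $\beta$ on $W$, so $K\cap W=W$ by irreducibility, giving $W\subseteq\ker f$ — contrary to the choice of $f$. Hence the core of $\phi$ is trivial.

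For the \emph{only if} direction, assume $\phi\colon H\to G$ has trivial core; the case $H=1$ (so $G\cong C_p$) is immediate, so suppose $H\neq 1$. If $\mu=0$ then $\phi(H)\le H$ and $H$ is itself a nonzero core candidate, impossible; so $\mu$ is onto, $H_0:=\ker\mu$ has index $p$ in $H$, and $\phi$ restricts to an honest homomorphism $H_0\to H$. I must now deduce that $H$ is elementary abelian and that $G$ splits over $H$. For the first, suppose $H$ is not elementary abelian, so $pH\neq 0$; then $pH$ is characteristic in $H$, hence $A$-invariant and normal in $G$, and $pH\subseteq H_0$ because $\mu(pH)=0$. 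Using the identity $\phi(ph)=\phi(h)^p$ — which forces $\phi$ to carry $pH$ into the subgroup $G^{p}$ of $p$-th powers — together with the cocycle relation for $\tau$ and the $A$-action, I aim to produce, by descending through the characteristic subgroups $p^iH$ and $H[p^j]$ of $H$, a nonzero $A$-invariant subgroup of $H_0$ that must be $\phi$-invariant, contradicting trivial core. For the splitting, suppose instead that every element of $G\setminus H$ has order greater than $p$; recasting this as a statement about $a^p$ and the image of $(A-\mathrm{id})^{p-1}$, I again aim to manufacture a nonzero $A$-invariant, $\phi$-invariant subgroup inside $\ker\mu$, again a contradiction.

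The main obstacle is this last direction. In the \emph{if} direction one designs $\phi$; here $\phi$ is arbitrary subject only to the twisted-cocycle constraint, and no single obvious characteristic subgroup of $H$ — not $pH$, not $\ker\phi$, not $\Omega_1(Z(G))$ — need be $\phi$-invariant on the nose. The real work is a simultaneous bookkeeping argument that tracks how $\phi$ moves the descending chain $H\supseteq H_0\supseteq H_0\cap\phi^{-1}(H_0)\supseteq\cdots$, whose intersection over all $G$-conjugates must be trivial, against the exponent-$p$ filtration of $H$; it is this interaction, rather than either ingredient alone, that forces $G$ to have the stated form.
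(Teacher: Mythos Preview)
Your sufficiency (``if'') construction is correct and is a pleasant variant of the paper's: the paper cycles a basis of the fixed space $E_1=\ker(A-\mathrm{id})$ nilpotently into $a$ via $b_{1,1}\mapsto b_{2,1}\mapsto\cdots\mapsto b_{m,1}\mapsto a$ and kills a Jordan complement, whereas you act on $W=E_1$ by an operator $\beta$ with irreducible characteristic polynomial and detect escape from $H$ via a functional $f$ not vanishing on $W$. Both exploit the same fact --- every nonzero $A$-invariant subspace meets $W$ --- and both work; your phrase ``$\phi$ agrees with $\beta$ on $W$'' should read ``on $K\cap W$'' (since $f|_W\neq 0$), but that is what your argument actually uses.

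The necessity (``only if'') direction, by contrast, is not a proof but a plan you explicitly leave unfinished, and you have overestimated its difficulty. The characteristic subgroup missing from your list is $\Omega_1(H)=\{h\in H: h^p=1\}$ (not $\Omega_1(Z(G))$). It is nontrivial and normal in $G$; if $\phi(\Omega_1(H))\subseteq H$, then since $\phi$ preserves the condition ``order divides $p$'' it would be $\phi$-invariant, contradicting trivial core. Hence some $h\in\Omega_1(H)$ has $\phi(h)=:a\in\phi(H)\setminus H$ of order exactly $p$, and the extension splits. Now the crucial move: decompose $\phi(h)$ \emph{inside the abelian group $\phi(H)$} rather than as $\tau(h)a^{\mu(h)}$ with $\tau(h)\in H$. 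Since $[\phi(H):\phi(H)\cap H]=p$, every $\phi(h)$ equals $a^eh_1$ with $h_1\in\phi(H)\cap H$, and because $\phi(H)$ is abelian and $a^p=1$ one gets $\phi(h^p)=\phi(h)^p=(a^eh_1)^p=h_1^p\in H^p$. Thus $H^p$ \emph{is} $\phi$-invariant on the nose, hence trivial by simplicity of $\phi$. No descending chains, no simultaneous bookkeeping: two characteristic subgroups, $\Omega_1(H)$ then $H^p$, in that order and with that choice of $a$, finish the job.
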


The result can be recast in a purely algebraic form using the language of virtual endomorphisms (explained in Section~\ref{s:virtual}), and it is this reformulated statement that we actually prove in Section~\ref{s:proof}. 

\begin{theorem} \label{t:virtual}
Let $G$ be a group that fits in a short exact sequence  
\[ 1 \to H \to G \to C_p \to 1, \]
where $H$ is a finite, nontrivial, abelian $p$-group and $C_p$ denotes the cyclic group of order $p$. 

There exists a virtual endomorphism $\phi:H \to_p G$ with trivial core if and only if the sequence splits and $H$ is an elementary abelian $p$-group. 
\end{theorem}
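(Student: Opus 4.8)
The proof splits into the two implications, and it is convenient to recall that in the language of Section~\ref{s:virtual} the core of $\phi$ is the largest subgroup of $H$ that is normal in $G$ and invariant under $\phi$, so that $\phi$ has trivial core exactly when $G$ has no such nontrivial subgroup inside $H$. Thus the statement amounts to: there is a homomorphism $\phi\colon H\to G$ admitting no nontrivial $G$-invariant $\phi$-invariant subgroup of $H$ if and only if the sequence splits and $H$ is elementary abelian.

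\emph{Necessity of splitting and of being elementary abelian.} Assume $\phi\colon H\to_p G$ has trivial core. If the extension did not split, then $G$ would have no element of order $p$ outside $H$, so every element of order dividing $p$ in $G$ lies in $H$; hence $\Omega_1(H)$ is characteristic in $H$, hence normal in $G$, nontrivial since $H\neq 1$, and $\phi(\Omega_1(H))\subseteq\Omega_1(G)=\Omega_1(H)$ because $\phi$ is a homomorphism, contradicting triviality of the core. So the extension splits; write $G=H\rtimes\langle t\rangle$ with $t^p=1$ and let $\tau$ be conjugation by $t$. Now $\phi(H)\not\subseteq H$, for otherwise $H$ itself is a nontrivial normal $\phi$-invariant subgroup. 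When $\tau\neq 1$ the abelian group $\phi(H)$, not being contained in $H$, lies inside $Z(G)\langle th_1\rangle$ for a suitable $h_1\in H$; taking $p$-th powers and using $(th_1)^p\in H^\tau=Z(G)$ gives $\phi(\Phi(H))=\phi(H)^p\subseteq Z(G)$. Since in a split extension $G^p=H^p\,N(H)$, where $N=1+\tau+\dots+\tau^{p-1}$, the argument now divides according to whether $N(H)\subseteq H^p$: if it is, then $\phi(\Phi(H))\subseteq G^p=H^p=\Phi(H)$, so $\Phi(H)$ is a nontrivial normal $\phi$-invariant subgroup unless $\Phi(H)=1$, i.e.\ $H$ is elementary abelian. (The case $\tau=1$, i.e.\ $G$ abelian, is immediate in the same way, since then $G^p=H^p$.) The remaining case, $N(H)\not\subseteq H^p$ while $H$ is not elementary abelian, is the hard one; there I would combine the information $\phi(\Phi(H))\subseteq Z(G)$ with the constraint $\ker\phi\cap Z(G)=1$ (forced by triviality of the core), the resulting tight bounds on $|\operatorname{im}\phi|$, and the chain $H\supseteq\Phi(H)\supseteq\Phi^2(H)\supseteq\cdots$, to force a nontrivial normal $\phi$-invariant subgroup and reach a contradiction.

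\emph{The construction.} Conversely let $G=V\rtimes\langle t\rangle$ with $V$ elementary abelian of rank $d$ and $t^p=1$. The $G$-normal subgroups of $V$ are precisely the $\mathbb{F}_p[\langle t\rangle]$-submodules $W$, and writing $\phi$ in coordinates $\phi(v)=(\alpha(v),\delta(v))$ relative to the splitting, with $\delta\colon V\to G/V$, one checks that $\phi(W)\subseteq W$ forces $W\subseteq\ker\delta$ and $\alpha(W)\subseteq W$, where $\alpha$ is additive on $\ker\delta$. So it suffices to choose a nonzero $\delta$ and a linear map on $\ker\delta$ so that no nonzero submodule of $V$ inside $\ker\delta$ is invariant under the induced map. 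When $t$ acts trivially one takes the $V$-component of $\phi$ to be the companion matrix of an irreducible polynomial of degree $d$, whose only invariant subspaces are $0$ and $V\not\subseteq\ker\delta$; when $V$ is an indecomposable $\mathbb{F}_p[\langle t\rangle]$-module it is enough to make the one-dimensional socle fail to be $\phi$-invariant. In general one decomposes $V$ into indecomposables and separates off the trivial part of $t$, combining companion-matrix and ``shift''-type maps and choosing $\delta$ so that the finitely many relevant invariant subspaces are pushed out of $\ker\delta$; one then verifies that $\phi$ is a well-defined homomorphism of trivial core.

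\emph{The main obstacle.} The genuinely technical points are, in the necessity half, the case $N(H)\not\subseteq H^p$ with $H$ not elementary abelian — which I expect to be the real crux — and, in the construction, ensuring in the general case that \emph{both} the ``coordinate'' and the ``diagonal'' $\tau$-submodules are simultaneously excluded from $\ker\delta$; the latter is a real difficulty over $\mathbb{F}_p$, where a union of proper subspaces can be the whole space, so the choices must be explicit rather than generic.
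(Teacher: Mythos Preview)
Your forward direction contains a self-acknowledged gap (the case $N(H)\not\subseteq H^p$), but the gap is artificial: it comes from fixing the splitting element $t$ in advance rather than choosing it inside $\phi(H)$. Your own argument for splitting actually shows that $\phi(\Omega_1(H))\not\subseteq H$, so there is an element $a$ of order $p$ in $\phi(H)\setminus H$. Use this $a$ in place of $th_1$: since $\phi(H)$ is abelian, every $\phi(h)$ has the form $a^e h_1$ with $h_1\in\phi(H)\cap H$, and then $\phi(h^p)=(a^e h_1)^p=a^{ep}h_1^p=h_1^p\in H^p$. Thus $H^p$ is $\phi$-invariant and normal in $G$, hence trivial, with no case split needed. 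This is exactly the paper's argument (stated there for regular $\phi(H)$, but for abelian $\phi(H)$ it is the one-line computation above).

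Your backward direction is more seriously incomplete, and the strategy is harder than necessary. The paper does not try to control \emph{all} $\tau$-submodules simultaneously via companion matrices and a carefully chosen $\delta$; instead it exploits a single structural fact: every nonzero $\tau$-invariant subspace of $V$ contains a nonzero $\tau$-fixed vector (since $\tau$ is unipotent). Let $E_1=\ker(\tau-1)$ with basis $b_1,\dots,b_m$ (one vector per Jordan block), let $K$ be a complement, and define $\phi$ by $b_1\mapsto b_2\mapsto\cdots\mapsto b_m\mapsto a$ and $\phi(K)=1$. This is a homomorphism because $b_2,\dots,b_m,a$ commute and all have order $p$. Any nontrivial $G$-normal subgroup of $H$ meets $E_1$ nontrivially, but every nonzero element of $E_1$ is eventually pushed to $a\notin H$ by iterating $\phi$; hence the core is trivial. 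Your worry about unions of proper subspaces covering $V$ never arises, because the construction kills $K$ outright and handles only the $m$-dimensional fixed space by a cyclic shift terminating outside $H$.
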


The world of groups that admit faithful, self-similar actions on regular rooted trees is rich in important and colorful examples, such as the first Grigorchuk group~\cite{grigorchuk:burnside} and groups related to it~\cite{grigorchuk:gdegree,grigorchuk:pgps,bartholdi-s:wpg,sunic:hausdorff}, Gupta-Sidki $p$-groups~\cite{gupta-s:burnside, gupta-s:p}, Basilica group~\cite{grigorchuk-z:basilica1,bartholdi-v:basilica}, Hanoi Towers groups~\cite{grigorchuk-s:hanoi-cr,grigorchuk-s:hanoi-spectrum} and the plethora of examples of groups obtained as iterated monodromy groups of self-coverings of the Riemann sphere by post-critically finite rational maps~\cite{nekrashevych:b-selfsimilar,bartholdi-n:rabbit} (and some more general finite partial self-coverings of topological spaces and orbispaces~\cite{nekrashevych:b-selfsimilar}).  

A priori, there are seemingly few necessary conditions for a group to admit a faithful, self-simiar action on a regular rooted tree. For instance, the group needs to be residually finite and there should be only finitely many different primes dividing the orders of the elements of finite order in the group.  

A straightforward way to confirm that some well known group $G$ admits a faithful, self-similar action on a rooted tree is by constructing a self-similar group and then proving that the constructed group is isomorphic to $G$. For instance, this was the way a faithful, self-similar action was established for the solvable Baumslag-Solitar groups and other ascending HNN extensions of free abelian groups~\cite{bartholdi-s:bs}, as well as for free groups of finite rank~\cite{vorobets-v:free,vorobets-v:free2,steinberg-v-v:free} and free products of finitely many copies of $C_2$~\cite{savchuk-v:free-c2}.   

A more organized approach, which in theory could work for any group, is provided by use of virtual endomorphisms. This approach is used in~\cite{nekrashevych-s:half} to construct actions of free abelian groups on the binary tree, in~\cite{berlatto-s:virtual} to construct actions of some torsion-free, nilpotent groups, and in~\cite{kapovich:arithmetic-selfsimilar} to show that an irreducible lattice in a semisimple algebraic group is virtually isomorphic to an arithmetic lattice if and only if it admits a faithful, self-similar action on a rooted tree. 

While a structural description of the class of self-similar groups that  is not a mere restatement of the definition involving the action on a tree seems well beyond reach, there is more hope in some restricted settings. Nekrashevych and Sidki showed that if a finitely generated, nilpotent group admits a faithful, self-similar action on the binary tree, then it is either free abelian group of finite rank or a finite 2-group~\cite{nekrashevych-s:half}. They proceeded by classifying the faithful, self-similar actions of free abelian groups of finite rank on the binary rooted tree. Our study of finite, self-similar $p$-groups is complementary to their work as well as to the study of self-similar, torsion-free, finitely generated, nilpotent groups in~\cite{berlatto-s:virtual} and self-similar, abelian groups in~\cite{brunner-s:abelian}.  In this context one may ask the following. 

\begin{question}
Which finite $p$-groups admit a faithful, self-similar action on the rooted $p$-ary tree?
\end{question}

In addition to the interpretation in terms of virtual endomorphisms, the question can also be restated in terms of finite automata (in this setting, self-similar groups are often called state-closed groups~\cite{sidki:circuit} or automaton groups~\cite{grigorchuk-n-s:automata}), but we will not pursue this aspect here.  

Note that the self-similarity is essential, since every finite $p$-group admits a faithful action on the rooted $p$-ary tree. Indeed, every finite $p$-group embeds, for sufficiently large $n$, in the Sylow $p$-subgroup of the symmetric group $S_{p^n}$. The Sylow $p$-subgroup of $S_{p^n}$ is the iterated wreath product $K_{p,n}=C_p \wr (C_p \wr ( C_p \wr \dots \wr C_p))$, where there are $n$ factors (see, for instance,~\cite{kaloujnine:p-sylow}; note that, in our notation, in each step of the iteration the cyclic factor on the left is the active one). On the other hand, $K_{p,n}$ admits a faithful, self-similar action on the rooted $p$-ary tree. Therefore, every finite $p$-group embeds in a finite, self-silmiar $p$-group. 

%--------------------------------------------------

\subsection*{Acknowledgments} The author is thankful to Rostislav Grigorchuk, Said Sidki, Volodymyr Nekrashevych, Marcin Mazur and the especially the referee for their helpful remarks. 

%------------------------------------------------------------------

\section{Virtual endomorphisms} \label{s:virtual}

This section contains the necessary background on virtual endomorphisms. 

\begin{definition}
A homomorphism $\phi: H \to G$ from a subgroup $H$ of finite index $k$ in $G$ to $G$ is called \emph{virtual endomorphism}. Such a homomorphism is also called $\frac1k$-endomorphism of $G$ and denoted by $\phi: H \to_k G$.  
\end{definition}

Every virtual endomorphism $\phi: H \to_k G$ induces a self-similar action of $G$ on a  rooted $k$-ary tree by automorphisms. We postpone the description of the action, but we point out that it is not always faithful. The following is proved in~\cite[Proposition 3.2]{{nekrashevych:virtual}} (see also~\cite[Proposition 2.7.5]{nekrashevych:b-selfsimilar}). 

\begin{proposition} \label{p:faithful}
The action of $G$ on the $k$-ary rooted tree induced by the virtual endomorphism $\phi: H \to_k G$ is faithful if and only if the only normal subgroup of $G$ contained in $H$ that is $\phi$-invariant is the trivial group. 
\end{proposition}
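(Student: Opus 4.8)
The plan is to identify the kernel of the induced action with a concrete, intrinsic subgroup of $G$ and then read the proposition off that identification. Specifically, I would prove that the kernel is exactly the largest subgroup $K$ of $G$ that is (i) normal in $G$, (ii) contained in $H$, and (iii) $\phi$-invariant, i.e.\ $\phi(K) \leq K$; this is the \emph{$\phi$-core} of $H$. Granting this, the proposition follows at once: the action is faithful precisely when $K = 1$, and since $K$ contains every subgroup of $G$ satisfying (i)--(iii) simultaneously, $K$ is trivial if and only if the trivial group is the only such subgroup, which is the stated condition. I would not even construct the $\phi$-core separately: it is enough to show that the kernel satisfies (i)--(iii) and that any subgroup satisfying (i)--(iii) is contained in the kernel, since together these exhibit the kernel as the maximum of this family.

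First I would recall the construction of the action, relative to which everything is bookkeeping. Fix a right transversal $1 = t_0, t_1, \dots, t_{k-1}$ of $H$ in $G$ and identify the first level of the $k$-ary tree $X^*$ with the cosets $Ht_0, \dots, Ht_{k-1}$. Each $g \in G$ induces a permutation $\sigma_g$ of these cosets via the right multiplication action of $G$ on $H \backslash G$; writing $j = \sigma_g(i)$, one has $t_i g t_j^{-1} \in H$, and one sets the section $g|_i = \phi(t_i g t_j^{-1}) \in G$. The element $g$ acts on $X^*$ by permuting the first level according to $\sigma_g$ and acting on the subtree below $i$ as $g|_i$, recursively. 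The only computation of substance is that $g \mapsto (\sigma_g; (g|_i)_i)$ respects multiplication, which comes down to the identity $t_i(gh)t_l^{-1} = (t_i g t_j^{-1})(t_j h t_l^{-1})$ for $j = \sigma_g(i)$ and $l = \sigma_h(j)$, combined with $\phi$ being a homomorphism; here I would just keep the (right) action conventions consistent.

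With the construction in hand, the two halves are short. For the containment $N \leq \ker$ whenever $N \trianglelefteq G$, $N \leq H$, $\phi(N) \leq N$: given $g \in N$, normality gives $t_i g t_i^{-1} \in N \leq H$ for every $i$, so $g$ fixes the first level, and each section $g|_i = \phi(t_i g t_i^{-1})$ again lies in $\phi(N) \leq N$. I then induct on the levels of the tree with the invariant ``every section of $g$ at the current level lies in $N$'': at each step normality forces the next level to be fixed and $\phi$-invariance keeps the new sections inside $N$, so $g$ acts trivially. Conversely, let $K = \ker$; it is normal in $G$ as the kernel of a homomorphism; any $g \in K$ fixes the vertex $Ht_0 = H$, hence $Hg = H$ and $g \in H$, so $K \leq H$; and for $g \in K$ the section at that vertex is $g|_0 = \phi(t_0 g t_0^{-1}) = \phi(g)$, which acts trivially because $g$ does, so $\phi(g) \in K$ and $\phi(K) \leq K$. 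Thus $K$ satisfies (i)--(iii) and contains every subgroup that does, completing the identification.

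I expect no deep obstacle here: this is essentially the standard ``core'' computation for the self-similar action attached to a virtual endomorphism. The one place to be careful is the inductive step, where one must be consistent about whether sections and tree permutations compose on the left or on the right, and must use that for a vertex fixed by $g$ the relevant transversal element conjugates $g$ back into $N$ (respectively keeps $\phi(g)$ inside $K$). I would also add a one-line remark that, although the action itself depends on the chosen transversal, the kernel $K$ does not --- it is defined purely in terms of $G$, $H$, and $\phi$ --- so the proposition is well posed and the notion of ``trivial core'' used in Theorems~\ref{t:main} and~\ref{t:virtual} is unambiguous.
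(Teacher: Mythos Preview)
Your argument is correct and is precisely the standard proof: identify the kernel of the induced action with the $\phi$-core of $H$ by verifying both containments, using normality to show the first level is fixed and $\phi$-invariance to propagate downwards. The paper itself does not prove this proposition; it simply cites \cite[Proposition~3.2]{nekrashevych:virtual} and \cite[Proposition~2.7.5]{nekrashevych:b-selfsimilar}, and the argument you outline is exactly what appears there.

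One small point of consistency: the paper's own description of the induced action (in the final section) uses left cosets, with sections $g|_x = \phi(\overline{gt_x}^{\,-1}gt_x)$ where $\overline{gt_x}$ is the representative of the left coset $gt_xH$, whereas your setup uses right cosets $H\backslash G$ with sections $\phi(t_i g t_j^{-1})$. The two conventions are equivalent and, as you correctly observe, the kernel is independent of the transversal and of this choice; but if the proof is to be inserted into this paper you should align with the paper's conventions so that $g|_0 = \phi(g)$ holds on the nose (which it does under both conventions once $t_0=1$).
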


Note that, by definition, $K$ is $\phi$-invariant if $\phi(K) \subseteq(K)$. 

The faithfulness criterion motivates the following definition. 

\begin{definition}
The \emph{core} of a virtual endomorphism $\phi: H \to_k G$ is the largest normal subgroup of $G$ contained in $H$ that is $\phi$-invariant. 

A virtual endomorphism $\phi :H \to_k G$ is \emph{simple} if it has trivial core. 
\end{definition}

Note that the core is well defined since if $N_1$ and $N_2$ are two normal subgroups of $G$ contained in $H$ that are $\phi$-invariant, then so is their product $N_1N_2$. 

%------------------------------------------------------------------------

\section{Virtual $\frac1p$-endomorphisms}

Before we move on to the case of virtual $\frac1p$-endomorphisms of finite $p$-groups with abelian domain we provide several examples and general results that will be of use in the subsequent sections. 

\begin{example} \label{e:elementary}
We provide an injective, simple virtual endomorphism $\phi: H \to_p G$ of the elementary abelian $p$-group $G=C_p^m$. 

Let $S=\{e_1,\dots,e_m\}$ be a generating set and $H=\langle e_2,\dots,e_m\rangle$. Define an injective virtual endomorphism $\phi:H \to_p G$ by $\phi(e_2^{\epsilon_2}\dots e_m^{\epsilon_m})=e_1^{\epsilon_2}\dots e_{m-1}^{\epsilon_m}$, for $\epsilon_2,\dots,\epsilon_m \in \{0,\dots,p-1\}$. 

Clearly, the only $\phi$-invariant subgroup of $H$ is the trivial subgroup. Therefore the core of $\phi$ is trivial. 
\end{example}

\begin{proposition} \label{p:K}
Let $\phi: H \to _p G$ be a simple, non-injective virtual endomorphism, where $G$ is a finite, non-abelian $p$-group. Let $K$ be the kernel of $\phi$. Then

\textup{(i)} $K$ does not contain any nontrivial normal subgroup of $G$. 

\textup{(ii)} For every $t \in G \setminus H$ and $i=0,\dots,p-1$, $K^{t^i}$ is normal in $H$ and $K \cap K^t \cap \dots \cap K^{t^{p-1}} = 1$.   

\textup{(iii)} The group $H$ is a nontrivial subdirect product. 
\end{proposition}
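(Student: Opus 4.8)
My plan is to establish the three parts in sequence, deriving (ii) and (iii) from (i). Throughout I would use two elementary facts about the situation: since $H$ has index equal to the prime $p$ in the finite $p$-group $G$, the subgroup $H$ is normal in $G$; and for any $t \in G \setminus H$ the coset $tH$ generates $G/H \cong C_p$, so that $G = \langle H, t \rangle$ and $t^p \in H$.

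For part (i), suppose $N$ is a nontrivial normal subgroup of $G$ with $N \subseteq K$. Then $\phi(N) \subseteq \phi(K) = 1 \subseteq N$, so $N$ is $\phi$-invariant; being a nontrivial $\phi$-invariant normal subgroup of $G$ contained in $H$, its existence contradicts the assumption that $\phi$ is simple. Hence no such $N$ exists.

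For part (ii), fix $t \in G \setminus H$. Since $H^{t^i} = H$ for all $i$ and $K = \ker\phi \trianglelefteq H$, each conjugate $K^{t^i}$ is normal in $H$. Moreover, because $K \trianglelefteq H$ and every element of $G$ can be written as $h t^i$ with $h \in H$ and $0 \le i < p$, one has $\{K^g : g \in G\} = \{K^{t^i} : 0 \le i < p\}$, so the subgroup $M := K \cap K^t \cap \cdots \cap K^{t^{p-1}}$ is exactly the normal core $\bigcap_{g \in G} K^g$ of $K$ in $G$. As $M$ is a normal subgroup of $G$ contained in $K$, part (i) forces $M = 1$. (If one prefers a direct check that $M \trianglelefteq G$: conjugation by $t$ cyclically permutes $K, K^t, \dots, K^{t^{p-1}}$ — here $K^{t^p} = K$ because $t^p \in H$ — hence fixes $M$, and $H$ normalizes each $K^{t^i}$ and therefore $M$, so $G = \langle H,t\rangle$ normalizes $M$.)

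For part (iii), the homomorphism $H \to \prod_{i=0}^{p-1} H/K^{t^i}$ sending $h$ to the tuple of cosets $hK^{t^i}$ has kernel $M = 1$ by (ii) and projects onto each coordinate, so $H$ is a subdirect product of the quotients $H/K^{t^i}$, each of which is isomorphic to $\phi(H) = H/K$ (conjugation by $t$ furnishes the isomorphisms between consecutive factors). It remains to see this decomposition is nontrivial, i.e.\ the factors are proper and nontrivial: $K \neq 1$ since $\phi$ is non-injective, and $K \neq H$ since otherwise $\phi$ would be the trivial map, making $H$ itself a $\phi$-invariant normal subgroup of $G$ contained in $H$ and forcing $H = 1$, hence $G \cong C_p$ abelian, against hypothesis; moreover not all $K^{t^i}$ coincide, for $K^t = K$ together with $K \trianglelefteq H$ would give $K \trianglelefteq \langle H, t\rangle = G$ and thus $K = 1$ by (i). The argument is short, and the only points needing care are the identification in (ii) of $M$ with the normal core of $K$ in $G$ — precisely where $H \trianglelefteq G$ and $t^p \in H$ are used — and the routine elimination in (iii) of the degenerate cases $K = 1$ and $K = H$.
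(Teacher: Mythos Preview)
Your proof is correct and follows essentially the same route as the paper: part (i) via $\phi$-invariance of subgroups of $K$, part (ii) by identifying the intersection with the normal core of $K$ in $G$ and invoking (i), and part (iii) from the nontrivial normal subgroups $K^{t^i}$ of $H$ with trivial intersection. Your write-up is in fact more careful than the paper's, which leaves implicit the checks that $K \neq H$ and that the conjugates $K^{t^i}$ are not all equal.
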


\begin{proof}
(i) Every subgroup of $K$ is $\phi$-invariant. Since the core of $\phi$ is trivial, no nontrivial subgroup of $K$ is normal in $G$. 

(ii) Since $K$ is normal in $H$ and $H$ is normal in $G$, the conjugates of $K$ in $G$ are normal in $H$.  

Since $K$ is normal in $H$, but not normal in $G$, and $[G:H]=p$, the group $H$ is the normalizer of $K$ and there are exactly $p$ distinct conjugates of $K$ in $G$, namely $K,K^t,\dots,K^{t^{p-1}}$. Their intersection is the core of $K$ in $G$ (the largest subgroup of $K$ that is normal in $G$), which is trivial by (ii).  

(iii) This follows from the fact that $K,K^t,\dots,K^{t^{p-1}}$ are nontrivial, normal subgroups of $H$ with trivial intersection. 
\end{proof}

The above observations can be used to establish that some finite $p$-groups do not admit simple $\frac1p$-endomorphisms. 

\begin{example}[Dihedral groups]
Consider the dihedral group $D_{2^n}= \langle a,b \mid a^2=b^2=(ab)^{2^n}=1 \rangle$, for $n \geq 3$. 

Each of the maximal subgroups of $D_{2^n}$ is subdirectly irreducible. Therefore, by Proposition~\ref{p:K}, $D_{2^n}$ does not admit simple, non-injective $\frac12$-endomorphisms. On the other hand, the only two isomorphic maximal subgroups of $D_{2^n}$ share their Frattini subgroup and this subgroup is nontrivial, normal subgroup of $D_{2^n}$. Any injective virtual endomorphism would keep this group invariant. Therefore, $D_{2^n}$ does not admit simple, injective $\frac12$-endomorphisms either. 

\end{example}

We end this section with a fairly general necessary condition for the existence of simple $\frac1p$-endomorphisms. 

\begin{lemma}[Splitting Lemma] \label{l:splitting}
Let $G$ be a group that fits in a short exact sequence  
\[ 1 \to H \to G \to C_p \to 1.  \]

If $\phi:H \to_p G$ is a simple endomorphism and $H$ contains elements of order $p$, then $\phi(H) \setminus H$ also contains elements of order $p$. 

Moreover, the given sequence splits. 
\end{lemma}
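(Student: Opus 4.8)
The plan is to show that unless $\phi(H)\setminus H$ already contains an element of order $p$, one can exhibit a nontrivial subgroup of the core of $\phi$, contradicting simplicity. The natural candidate is $\Omega$, the subgroup of $H$ generated by all elements of order dividing $p$. Since $H$ has an element of order $p$, $\Omega\ne 1$; since every automorphism of $H$ permutes the elements of order dividing $p$, $\Omega$ is characteristic in $H$, hence (as $H\trianglelefteq G$) normal in $G$. So $\Omega$ is already a nontrivial normal subgroup of $G$ contained in $H$, and it lands in the core as soon as it is $\phi$-invariant.

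The crux — and really the only step with any content — is the observation that $\phi(\Omega)\not\subseteq H$. Indeed, if $\phi(\Omega)\subseteq H$, then for each generator $h$ of $\Omega$ the image $\phi(h)$ lies in $H$ and has order dividing $p$, hence $\phi(h)\in\Omega$; thus $\phi$ carries a generating set of $\Omega$ into $\Omega$, so $\Omega$ is $\phi$-invariant, contradicting the triviality of the core. I expect this to be the main obstacle only in the sense of recognizing that $\Omega$ (rather than some ad hoc subgroup) is the right object to test against the core; the verification itself is immediate. Granting $\phi(\Omega)\not\subseteq H$, some generator $u$ of $\Omega$ — that is, some $u\in H$ with $u^p=1$ — must satisfy $\phi(u)\notin H$. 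Then $s:=\phi(u)\in\phi(H)\setminus H$, and $s^p=\phi(u^p)=\phi(1)=1$, while $s\ne 1$ since $s\notin H$; hence $s$ has order exactly $p$, which proves the first assertion.

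For the ``moreover'' part I would simply unwind this element $s$. Since $[G:H]=p$ and $s\notin H$, the coset $sH$ generates $G/H\cong C_p$, so $s^i\notin H$ for $1\le i\le p-1$ while $s^p=1\in H$; therefore $\langle s\rangle\cong C_p$, $\langle s\rangle\cap H=1$, and $\langle s\rangle H=G$. Consequently $G=H\rtimes\langle s\rangle$ and the given short exact sequence splits.
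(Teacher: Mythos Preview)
Your proof is correct and follows essentially the same approach as the paper: your subgroup $\Omega$ is exactly the paper's $\langle P\rangle$ (where $P$ is the set of elements of $H$ of order dividing $p$), and both arguments hinge on the observation that if $\phi$ sent every such element back into $H$ then $\Omega$ would be a nontrivial $\phi$-invariant normal subgroup, contradicting simplicity. Your write-up spells out the splitting $G=H\rtimes\langle s\rangle$ a bit more explicitly than the paper does, but the underlying idea is identical.
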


\begin{proof}
Let $P$ be the set of elements in $H$ of order dividing $p$. The set $P$ contains nontrivial elements and the group $\langle P \rangle$ is a nontrivial, characteristic subgroup of $H$. Therefore, $\langle P \rangle$ is nontrivial, normal subgroup of $G$. The elements of $P$ are mapped under $\phi$ to elements of order dividing $p$. If all elements of $P$ are mapped inside $H$, then they are mapped to other elements in $P$. In that case $\langle P \rangle$ would be a nontrivial, normal, $\phi$-invariant subgroup of $G$, a contradiction. Therefore there exists an element in $P$ that is mapped under $\phi$ outside of $H$. However, the image of such an element has order $p$ (it cannot be trivial since it is outside of $H$). 

Since $G \setminus H$ contains elements of order $p$, the sequence splits. 
\end{proof}

%------------------------------------------------------------------------

\section{Virtual $\frac1p$-endomorphisms of finite $p$-groups with abelian domain}\label{s:proof}

One direction of Theorem~\ref{t:virtual} is a corollary of the following more general result on regular $p$-groups. Recall that a $p$-group $G$ is a regular $p$-group if, for every $a$ and $b$ in $G$ there exists $c$ in $[\langle a,b\rangle, \langle a,b \rangle]$ such that $(ab)^p = a^pb^pc^p$. 

\begin{proposition}\label{p:regular}
Let $G$ be a group that fits in a short exact sequence  
\[ 1 \to H \to G \to C_p \to 1, \]
where $H$ is a finite, nontrivial $p$-group. 

If there exists a simple virtual endomorphism $\phi:H \to_p G$ such that $\phi(H)$ is a regular $p$-group, then the sequence splits and $H$ has exponent $p$.  
\end{proposition}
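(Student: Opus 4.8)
My plan is to reduce to the Splitting Lemma by showing that $H$ has exponent $p$, and the strategy for that is to transfer exponent information back and forth across $\phi$ using regularity of the image. First observe that by Lemma~\ref{l:splitting} (the Splitting Lemma), it suffices to prove that $H$ has exponent $p$: once we know $H$ has exponent $p$, certainly $H$ contains elements of order $p$ (it is nontrivial), so $\phi(H)\setminus H$ contains elements of order $p$, hence $G\setminus H$ contains elements of order $p$ and the sequence splits. So the whole game is to force $\exp(H)=p$.

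The key mechanism is this. Let $P=\{h\in H : h^p=1\}$, a nontrivial characteristic subgroup of $H$, hence normal in $G$; and since $\phi$ is simple, $\langle P\rangle$ cannot be $\phi$-invariant, so (as in the Splitting Lemma) some $x\in P$ has $\phi(x)\in\phi(H)\setminus H$, an element of order $p$ lying outside $H$. Pick such an element $t=\phi(x)$; then $G=\langle H, t\rangle$ with $t^p=1$, and $\langle t\rangle$ is a complement, so $G=H\rtimes\langle t\rangle$. Now I want to run the argument again, but one level up: consider an arbitrary $h\in H$, look at the product $th\in G\setminus H$, and examine $(th)^p$. Since $t$ normalizes $H$, the element $(th)^p = h^t h^{t^2}\cdots h^{t^{p-1}} h \in H$ (reading the conjugation action of $t$), which is the ``norm'' $N(h)$ of $h$ under the cyclic action. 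The plan is to iterate $\phi$ and use that $\phi(H)$ is regular to control $p$-th powers: regularity says $(ab)^p \equiv a^p b^p \pmod{[\langle a,b\rangle,\langle a,b\rangle]^p}$, and more usefully for a regular $p$-group the set of $p$-th powers is a subgroup and the elements of order dividing $p$ form a subgroup, with $|\Omega_1| $ and $|\mho_1|$ behaving multiplicatively. So inside $\phi(H)$, which is regular, if we can show every element is a $p$-th power times something of order $p$ in a controlled way, we can pull back via $\phi$.

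Concretely, here is the intended line. Suppose for contradiction $\exp(H)=p^e$ with $e\ge 2$. The subgroup $\mho_{e-1}(H)=\langle h^{p^{e-1}} : h\in H\rangle$ is nontrivial, characteristic in $H$, hence normal in $G$, and it consists of elements of order $p$, so $\mho_{e-1}(H)\le P$. Since $\phi$ is simple, $\mho_{e-1}(H)$ is not $\phi$-invariant, so some $h^{p^{e-1}}\in H$ maps outside $H$; but $\phi(h)^{p^{e-1}} = \phi(h^{p^{e-1}})\notin H$ forces $\phi(h)$ to have order $p^e$ and to generate, together with $H\cap\phi(H)$-type data, a non-split situation — more precisely $\phi(h)^{p^{e-1}}$ has order $p$ and lies outside $H$, so $\langle \phi(h)^{p^{e-1}}\rangle$ is an order-$p$ complement, giving a splitting, but now $\phi(h)$ itself has order $p^e$ and $\phi(h)\in\phi(H)$ which is regular. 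In a regular $p$-group an element $y$ of order $p^e$ satisfies $y^{p^{e-1}}\ne 1$ and the cyclic subgroup $\langle y\rangle$ is ``isolated'' enough that $y^{p^{e-1}}$ lies in $\mho_{e-1}(\phi(H))$, and by regularity $\mho_{e-1}(\phi(H)) = \{ z^{p^{e-1}} : z\in\phi(H)\}$ is exactly the image of $\Omega$ complement structure. The point I will extract: the preimage under $\phi$ of $\phi(h)^{p^{e-1}}$ is $h^{p^{e-1}}$, which has order $p$, yet its $\phi$-image has order $p$ and is outside $H$; feeding this back, $\langle h^{p^{e-1}} \rangle^{\text{$G$-closure}}$ should end up $\phi$-invariant after finitely many steps because the chain of subgroups $\mho_{e-1}(H) \supseteq$ (its non-invariant part) can only descend finitely often — but each descent strictly decreases order while staying inside the finite group $P$, so eventually we reach a nontrivial $\phi$-invariant normal subgroup of $G$ inside $P$, contradicting simplicity.

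The main obstacle, and the place I expect the real work to be, is making the last ``iterate and descend'' argument precise: one has to set up a carefully chosen chain of characteristic subgroups of $H$ (powers of the $\mho_i$, or the lower exponent-$p$ central series, or $\Omega$-subgroups) and show that $\phi$ cannot keep pushing all of them partially outside $H$ forever, i.e. that simplicity plus finiteness forces $\exp H=p$. Regularity of $\phi(H)$ is exactly what guarantees the clean identities $|\Omega_i(\phi(H))| = |\phi(H):\mho_i(\phi(H))|$ and that $\mho_i$, $\Omega_i$ are ``as large as possible'', which is what lets the power maps and the subgroup $P$ interact predictably under $\phi$; without regularity these identities fail and the descent can stall. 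I would also need the elementary fact that if $G\setminus H$ contains an element of order $p$ then the sequence splits (used in the Splitting Lemma and reused here), and the fact that in a finite $p$-group a nontrivial characteristic subgroup of the normal subgroup $H$ of index $p$ is normal in $G$ — both of which are routine.
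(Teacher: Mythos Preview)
Your proposal has the right ingredients in hand but never assembles them into an argument, and the ``iterate and descend'' mechanism you leave as the main obstacle is indeed a genuine gap: you give no concrete chain of subgroups, no reason a limiting term should be nontrivial, and no reason it should be $\phi$-invariant. In fact the step ``$\mho_{e-1}(H)$ is not $\phi$-invariant, so some $h^{p^{e-1}}$ maps outside $H$'' is already not justified: non-invariance only says $\phi(\mho_{e-1}(H))\not\subseteq \mho_{e-1}(H)$, not that an image lands outside $H$.

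The paper's proof is much shorter and uses regularity exactly once, in the place you almost wrote down yourself. After the Splitting Lemma supplies an element $a\in\phi(H)\setminus H$ of order $p$, one has $[\phi(H):\phi(H)\cap H]=p$, so $\phi(H)=\langle a\rangle\ltimes(\phi(H)\cap H)$. Thus every $\phi(h)$ can be written as $a^e h_1$ with $h_1\in H\cap\phi(H)$. Now apply the regularity identity inside $\phi(H)$:
\[
\phi(h^p)=(a^e h_1)^p = a^{ep} h_1^p c^p = h_1^p c^p,
\qquad c\in[\langle a,h_1\rangle,\langle a,h_1\rangle]\le H,
\]
so $\phi(h^p)\in H^p$ for every $h\in H$. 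Hence $H^p$ is $\phi$-invariant, characteristic in $H$, and normal in $G$; simplicity of $\phi$ forces $H^p=1$, i.e.\ $\exp(H)=p$. You had the formula $(ab)^p\equiv a^p b^p \pmod{[\langle a,b\rangle,\langle a,b\rangle]^p}$ on the page; the missing move was to apply it to the decomposition $\phi(h)=a^e h_1$ and read off directly that $H^p$ is $\phi$-invariant, rather than launching a descent.
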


\begin{proof}
The sequence splits by the Splitting Lemma. Further, let $a$ be an element of order $p$ in $\phi(H) \setminus H$. Such an element exists by the Splitting Lemma. Since $[G:H]=p$ and $\phi(H) \neq \phi(H) \cap H$, it follows that $[\phi(H):(\phi(H)\cap H)]=p$. Therefore, 
\begin{equation} \label{e:phiH}
 \phi(H) = \langle a \rangle \ltimes (\phi(H) \cap H).  
\end{equation} 

It follows from~\eqref{e:phiH} that, for $h \in H$, there exists $e \in \{0,\dots,p-1\}$ and $h_1 \in H \cap \phi(H)$ such that 
\[ \phi(h) = a^e h_1. \]
Since $\phi(H)$ is regular and both $a$ and $h_1$ are in $\phi(H)$, this implies that 
\begin{equation} \label{e:phihp}
 \phi(h^p) = (a^e h_1)^p = a^{ep} h_1^p c^p = h_1^p c^p, 
\end{equation}
where $c \in [\langle a, h_1 \rangle, \langle a, h_1 \rangle]$. However, $H$ is normal in $G$, showing that $c \in H$. 

Consider $H^p$, the subgroup of $H$ generated by the $p$th powers of the elements in $H$. This group is characteristic in $H$, hence normal in $G$, and~\eqref{e:phihp} shows that it is $\phi$-invariant. Since $\phi$ has trivial core, $H^p$ is trivial. Thus $H$ has exponent $p$.  
\end{proof}

\begin{proof}[Proof of the forward direction of Theorem~\ref{t:virtual}]
The sequence splits by the Splitting Lemma. Since $H$ is abelian it is regular, and so is its image $\phi(H)$. By Proposition~\ref{p:regular}, $H$ has exponent $p$. Therefore $H$ is elementary abelian $p$-group. 
\end{proof}

\begin{proof}[Proof of the backward direction of Theorem~\ref{t:virtual}]
Let $a$ be an element of order $p$ outside of $H$. Conjugation by $a$ induces an automorphism $\alpha$ on $H$. 

We may, when convenient, think of $H$ as the vector space $V$  of dimension $n$ over the field on $p$ elements and $\alpha$ as an element in $GL(n,p)$ of order $p$. 

Since $\alpha$ has order $p$, the minimal polynomial of $\alpha$ divides $x^p -1=(x-1)^p$. Therefore, 1 is the only eigenvalue and $\alpha$ admits a Jordan normal form in which every block is of size no greater than $p \times p$ and has the form 
\[ 
 \begin{pmatrix}
 1 & 1 & 0 & \dots & 0 & 0 \\
 0 & 1 & 1 & \dots & 0 & 0 \\
 0 & 0 & 1 & \dots & 0 & 0\\
 \dots \\
 0 & 0 & 0 & \dots & 1 & 1 \\    
 0 & 0 & 0 & \dots & 0 & 1    
 \end{pmatrix}.
\]
Let $s_1 \geq \dots \geq s_m$ be the sizes of the Jordan blocks of $\alpha$ and  
\[
 B = \{ \ 
 b_{1,1}, \  b_{1,2}, \ \dots \ , b_{1,s_1}, \ 
 b_{2,1}, \  b_{2,2}, \ \dots \ , b_{2,s_2}, \ 
 \dots , \
 b_{m,1}, \  b_{m,2}, \ \dots \ , b_{m,s_m} \ \}
\]  
be the basis corresponding to the Jordan form.  The eigenspace of $V$ corresponding to the eigenvalue 1 is $E_1 = \langle b_{1,1}, b_{2,1}, \dots, b_{m,1} \rangle$. Denote $K=\langle \{ b_{i,j} \mid i=1,\dots,m, \ j=2,\dots,s_i \} \rangle$. and note that $V = E_1 \oplus K$. 

Define $\phi: H \to_p G$ on $E_1$ by  
\begin{equation} \label{e:phi}
 b_{1,1} \mapsto  b_{2,1} \mapsto b_{3,1} \mapsto \dots \mapsto  b_{m-1,1} \mapsto b_{m,1} \mapsto a, 
\end{equation}
and 
\[ b_{i,j} \mapsto 1, \]
for the basis elements in $K$. 

Note that $(b_{i,1})^a=b_{i,1}$, for $i=1,\dots,m$, and therefore 
\[
 b_{2,1}, \  b_{3,1}, \ \dots \ , \ b_{m-1,1}, \ b_{m,1}, \ a,
\] 
commute and generate a subgroup of $G$ isomorphic to $C_p^m$. This implies that $\phi$ is a well defined homomorphism (the definition of $\phi$ given above on the generators of $H$ can be extended to the whole group) and $K=Ker(\phi)$. 

Any subgroup of $H$ that is normal in $G$ corresponds, in the vector space point of view, to a subspace of $V$ that is invariant under $\alpha$. Such subspaces contain nontrivial vectors that are fixed by $\alpha$. Therefore, every nontrivial subgroup of $H$ that is normal in $G$ contains nontrivial elements of $E_1$. On the other hand, the nontrivial elements of $E_1$ are pushed outside of $H$ by iterations of $\phi$ (see~\eqref{e:phi}). This implies that there are no $\phi$-invariant subgroups of $H$ that are normal in $G$. 

Therefore $\phi$ is a simple $\frac1p$-endomorphism of $G$. 
\end{proof}

\begin{remark}
We may extend the approach taken in the proof of the backward direction of Theorem~\ref{t:virtual} as follows.  

Let $G$ be a $p$-group that fits in a split short exact sequence  
\[ 1 \to H \to G \to C_p \to 1, \]
where $H$ is a finite, nontrivial $p$-group. Let $E_1$ be the intersection of $H$ and the maximal elementary abelian $p$-group in the center of $G$. Note that this group is 
nontrivial, since $H$ is normal in $G$ and every normal subgroup of a $p$-group intersects the center nontrivially. Every subgroup of $H$ that is normal in $G$ must contain an element of $E_1$ (the role of $E_1$ is exactly the same as the role of the eigenspace $E_1$ in the 
proof of Theorem~\ref{t:virtual}). Let $E_1 = \langle b_1, b_2, \dots, b_m \rangle$ and let $a$ be an element of order $p$ in $G \setminus H$. The map given by   
\[
 b_1 \mapsto b_2 \mapsto \dots \mapsto b_m \mapsto a
\]
can be extended to a homomorphism $\phi_0: E_1 \to G$, since $a$ has order $p$ and commutes with $b_1,b_2,\dots,b_m$. If this map can be further extended to a homomorphism $\phi:H \to G$ (for instance if there exists a normal subgroup $K$ of $H$ such that $H=E_1 \ltimes K$ as in the proof of Theorem~\ref{t:virtual}), then this homomorphism would be a simple virtual endomorphism $\phi:H \to_p G$. 
\end{remark}

%------------------------------------------------------------------------

\section{Virtual endomorphisms and self-similar actions}

In this section we present a brief description of the relation between self-similar actions and virtual endomorphisms, providing ``translation'' between the languages and settings of Theorem~\ref{t:main} and Theorem~\ref{t:virtual}. For original definitions and more details see~\cite{nekrashevych:virtual} or~\cite{nekrashevych:b-selfsimilar}). 

Let $X=\{0,\dots,k-1\}$. Define the rooted $k$-ary tree as the graph with vertex set $X^*$, the set of words over $X$, where each vertex $u$ has $k$ children, the vertices $ux$, for $x \in X$. The empty word is the root of the tree.  

Every automorphism $g$ of the tree $X^*$ decomposes as 
\[ g = \pi_g \ (g|_0,g|_1,\dots,g|_{k-1}), \]
where $\pi_g$ is a permutation of the alphabet $X$, called root permutation of $g$, describing the action of $g$ on the first level of the tree, and $g|_0,\dots,g|_{k-1}$ are tree automorphisms, called sections of $g$, describing the action of $g$ on the subtrees below the first level. For every letter $x$ in $X$ and word $w$ over $X$, 
\[ g(xw) = \pi_g(x)g|_x(w). \]

\begin{definition}
A group of tree automorphisms $G$ is \emph{self-similar} if all sections of all elements in $G$ are elements in $G$.  
\end{definition}

Let $G$ be a self-similar group of automorphisms of the $k$-ary rooted tree $X^*$ acting transitively on the first level of the tree and let $H$ be the stabilizer of the vertex 0. A simple virtual $\frac{1}{k}$-endomorphism $\phi:H \to_k G$ may be defined by 
\[
 \phi(h) = h|_0. 
\]

We now provide a brief description of the self-similar action on a rooted $k$-ary tree by automorphisms associated to a virtual endomorphism $\phi: H \to _k G$ (see~\cite[Proposition 4.9]{nekrashevych:virtual} or~\cite[Proposition 2.5.10]{nekrashevych:b-selfsimilar}). 

Choose a transversal $T=\{t_0,t_1,\dots,t_{k-1} \}$ for $H$ in $G$, with $t_0=1$. For $g \in G$, let $\overline{g}$ denote the representative of the left coset $gH$. 

A self-similar action of $G$ on the rooted $k$-ary tree induced by $\phi$ is defined as follows. For $g \in G$, define the root permutation $\pi_g$ of $X=\{0,1,\dots,k-1\}$ by 
\[ \pi_g(x) = y  \quad \text{if and only if} \quad \overline{gt_{x}} = t_y \]
and the section of $g$ at $x \in X$ by 
\[ g|_x = \phi(\overline{gt_x}^{-1}gt_x).\] 

The action induced by the virtual endomorphism $\phi$ may be not faithful, and Proposition~\ref{p:faithful} provides a necessary and sufficient condition on $\phi$ which ensures the  faithfulness of the action. 

Together, the constructions of a simple virtual endomorphism from a self-similar group and a faithful, self-similar action from a simple virtual endomorphism provide a bridge between Theorem~\ref{t:main} and Theorem~\ref{t:virtual}. 

It should probably be pointed out that in the general constructions $H$ always plays the role of a stabilizer of a vertex on level 1 in the tree, while we thought of it as the stabilizer of the first level. However, since our groups are $p$-groups acting nontrivially on the first level of a $p$-ary tree, there is no difference between the stabilizer of the first level and the stabilizer of a vertex on the first level. 

We end with an example illustrating the construction of a self-similar action on a regular rooted tree from a virtual endomorphism. The example follows the virtual endomorphism construction from the proof of Theorem~\ref{t:virtual}. 

\begin{example}
Let $H = C_3^4 = \langle b, c, d, e \rangle$ and 
\[
 G= \langle a \rangle \ltimes H  = \langle a, H \mid a^3=1, \ b^a = b, \ c^a=bc, \ d^a=d, \ e^a = e \rangle.  
\]
The automorphism of $H=C_3^4$ induced by conjugation by $a$ is given by the matrix 
\[ 
\alpha = 
\begin{pmatrix}
 1 & 1 & 0 & 0 \\
 0 & 1 & 0 & 0 \\
 0 & 0 & 1 & 0 \\
 0 & 0 & 0 & 1
\end{pmatrix}
\]
and $\phi$ is defined by $b \mapsto d \mapsto e \mapsto a$ and $c \mapsto 1$.  Take $t_0=1$, $t_1=a$, and $t_2=a^2$ as coset representatives of $H$ in $G$. The action of $G$ on the rooted ternary tree is defined by 
\[
\begin{array}{llllcr}
 a  &= &(012) &(1, &1,&1),  \\
 b  &= &         &(d, &d,&d), \\
 c  &= &         &(1, &d,&d^2), \\
 d  &= &         &(e,&e,&e), \\
 e  &= &         &(a,&a,&a).  
\end{array}
\]
\end{example}

%--------------------------------------------------

\def\cprime{$'$}

%\bibliographystyle{alpha}
%\bibliography{../smath}

\end{document}